\newtheorem{thm}{Theorem}[section]
\newtheorem{col}[thm]{Corollary}
\newtheorem{lem}[thm]{Lemma}
\newtheorem{prop}[thm]{Proposition}
\theoremstyle{definition}
\theoremstyle{remark}
\newtheorem{rem}[thm]{Remark}
\theoremstyle{example}
\numberwithin{equation}{section}
\begin{document}
\date{}
\title{A quadratic bound on the number of boundary slopes of essential surfaces with bounded genus}
%\footnotetext{
%\textbf{Keywords:} \footnotetext{\textbf{Subject class:} 57M99, 55M25}}

\author{Tao Li 
\thanks{Partially supported by NSF grant DMS-0705285}\and 
Ruifeng Qiu 
\thanks{Partially supported by NSFC grant 10631060}\and 
Shicheng Wang
\thanks{Partially supported by NSFC grant 10625102}}

\maketitle

\begin{abstract}
Let $M$ be an orientable 3-manifold with $\partial M$ a single torus. We
show that the number of boundary slopes of immersed essential surfaces with
genus at most $g$ is bounded by a quadratic function of $g$. In the
hyperbolic case, this was proved earlier by Hass, Rubinstein and
Wang.

\textbf{Subject class:} 57M50, 57N10
\end{abstract}
% ---------------------------------------------------------------
%\tableofcontents
\section{Introduction}

A proper immersion $f: (F, \partial F)\to (M, \partial M)$ from a compact surface to a  compact 3-manifold is
essential if it is $\pi_1$-injective and $\partial$-injective, i.e., it maps essential loops and arcs in $F$ to essential loops and arcs in $M$.
Let $M$ be a compact orientable 3-manifold with $\partial M$ a single torus.  We say a slope $s$ in $\partial M$ is realized by an essential surface if there is a proper essential immersion $f:
(F, \partial F)\to (M,
\partial M)$ such that every component of $f(\partial F)$ is a curve of slope $s$ in $\partial M$.  Such an immersed surface is particularly interesting because it extends to a closed immersed surface in the closed 3-manifold $M(s)$ obtained by Dehn filling along the slope $s$.

The study of boundary slopes of essential surfaces has been an active and
attractive topic for long times. Hatcher \cite{Ha} showed that there are only finitely many boundary slopes of embedded essential surfaces.   The number of boundary slopes of small-genus embedded surfaces (e.g.~punctured spheres or tori) is quite small and the study of these exceptional slopes is a center topic in the theory of Dehn surgery, see the survey article \cite{Go}.

However, for immersed essential surfaces, there is no such bound in
general. In fact, there are examples that every slope is realized by
an immersed essential surface, see  \cite{Ba, BC, O}. In \cite{HRW},
Hass, Rubinstein and Wang show that for hyperbolic manifolds, the
number of boundary slopes of essential surfaces of genus at most $g$
is bounded by $Cg^2$, where $C$ is a constant independent of the
manifold (see also Agol \cite{Ag}).  The purpose of this paper is to
extend the quadratic bound result to general 3-manifolds.

\begin{thm}\label{Tmain}
Suppose $M$ is an orientable 3-manifold with $\partial M$ a single
torus. For any $g$, let $N_g(M)$ be the number of slopes that can be realized by essential immersed surfaces of
genus at most $g$.

Then $N_g(M)\le\left\{
\begin{array}{cl}
C(M)g^2 & g\ge 1 \\
C'(M) & g=0
\end{array} \right.$ for some constants $C(M)$ and $C'(M)$ that depend on $M$.
\end{thm}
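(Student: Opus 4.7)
The plan is to reduce the theorem to the hyperbolic case \cite{HRW} via the JSJ decomposition of $M$. Let $T_1,\dots,T_n$ be the JSJ tori and let $M_0$ be the JSJ piece containing the boundary torus $\partial M$. Given an essential immersion $f\colon(F,\partial F)\to(M,\partial M)$ with genus at most $g$ realizing a slope $s$, I would first homotope $f$ so that $\Sigma := f^{-1}(T_1\cup\cdots\cup T_n)$ is a disjoint union of essential simple closed curves in $F$, and so that each of the pieces $F_0,F_1,\dots$ obtained by cutting $F$ along $\Sigma$ maps essentially into its containing JSJ piece. Additivity of Euler characteristic combined with $\chi(F_j)\le 0$ for every $j$ gives $|\chi(F_j)|\le|\chi(F)|\le 2g$, so the genus and the number of interior boundary components of the distinguished piece $F_0$ (the one containing $\partial F$) are each bounded in terms of $g$.

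With this reduction in hand, the problem becomes bounding the number of boundary slopes realized on $\partial M$ by essential surfaces of bounded complexity inside the single JSJ piece $M_0$. If $M_0$ is hyperbolic, the Hass--Rubinstein--Wang bound \cite{HRW} applies directly to $M_0$ (viewed as a finite-volume hyperbolic manifold with cusp $\partial M$); the presence of a bounded number of non-slope boundary components on $F_0$ inflates its genus by at most $O(g)$, so the quadratic estimate survives. If $M_0$ is Seifert fibered, then the classical structure theorem for essential surfaces in Seifert fibered spaces shows that after further homotopy each component of $F_0$ is either vertical or horizontal. A vertical component forces $s$ to be the regular fiber slope, contributing a single slope. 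A horizontal component is a branched cover of the base orbifold whose degree is controlled by $-\chi(F_0)$, hence by $g$; the slope on $\partial M$ is then determined up to finite ambiguity by this degree, yielding at most $O(g)$ slopes in this case. The $g=0$ case degenerates to a finite count in each piece and is treated by the same scheme.

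The principal obstacle, and where I expect the technical heart of the proof to lie, is the normalization step: showing that an immersed essential surface can be homotoped so that it meets the JSJ tori in an essential 1-submanifold and so that each cut piece is essential in its JSJ piece. For embedded surfaces this is standard from incompressibility of the JSJ tori, but in the immersed setting one must use PL or least-area representatives together with a tower or Freedman--Hass--Scott style normal-form argument to eliminate inessential intersections without increasing the genus of $F$. A secondary difficulty is pinning down the slope count in the Seifert fibered case when $M_0$ has several boundary tori: horizontal surfaces can wrap differently around each, and one must verify that bounded $\chi(F_0)$ forces only linearly many possibilities on the distinguished torus $\partial M$. Assembling the piecewise estimates then yields $N_g(M)\le C(M)g^2$, with the quadratic rate inherited from any hyperbolic piece containing $\partial M$ and at worst linear or constant contributions from Seifert fibered pieces.
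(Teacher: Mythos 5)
Your reduction along the JSJ tori and your treatment of the case where the piece $M_0$ containing $\partial M$ is hyperbolic both match the paper: one applies the Hass--Rubinstein--Wang length estimate to $F\cap M_0$, using $\chi(F\cap M_0)\ge\chi(F)$, and the quadratic count follows. The genuine gap is in the case where $M_0$ is Seifert fibered, which is where essentially all of the work in the paper lives. Your key claim there --- that a horizontal piece $F_0$ is a branched cover of the base orbifold of degree $O(g)$ and that "the slope on $\partial M$ is then determined up to finite ambiguity by this degree" --- is false when $M_0$ has more than one boundary torus. The degree bound itself is correct (it is the paper's Lemma~\ref{Lver}: $\chi(O(M_0))\le -1/6$ forces degree $\le -6\chi(F)$), but the boundary slope of a horizontal surface on $\partial M$ is determined by its degree only when $\partial M_0$ is a single torus (Corollary~\ref{Csame}). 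When $M_0$ has other boundary tori $T_1,\dots,T_n$, Proposition~\ref{Phor} shows that horizontal surfaces can be built realizing essentially arbitrary non-vertical slopes on the $T_i$, and the slope induced on $\partial M$ varies with those choices; a fixed degree does not pin it down even up to finitely many values. You flag this as a "secondary difficulty," but it is the central one, and nothing in your outline resolves it.

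The paper's resolution requires input from \emph{outside} $M_0$: it builds a reference horizontal surface $S$ in $M_0$ whose slopes on each $T_i$ are chosen according to the neighboring JSJ piece $M_i$ (the fiber slope if $M_i$ is Seifert, an image of the fiber under the $I$-bundle involution if $M_i$ is a twisted $I$-bundle over a Klein bottle, and a Culler--Shalen embedded boundary slope if $M_i$ is hyperbolic). These choices let one bound $|F\cap(\partial S\setminus\partial M)|$ linearly in $-\chi(F)$ using what $F$ must do inside each $M_i$. An Euler-characteristic count then shows that if $\Delta(\mu,s_F)$ exceeds a linear function of $g$ (where $\mu$ is the slope of $S$ on $\partial M$), some arc of $S\cap F$ has both endpoints on $\partial M$, and a Hatcher-style orientation argument (Proposition~\ref{Lsign}) forces $s_F=\mu$, a contradiction. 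Finally, the quadratic bound in the Seifert case comes from combining \emph{two} linear intersection bounds, $\Delta(\mu,s_F)\le C_1g$ and $\Delta(\nu,s_F)\le 2g$ for the fiber slope $\nu$ (Zhang's lemma); a single linear bound against one reference slope permits infinitely many slopes, so your expectation of an $O(g)$ count from the Seifert case alone cannot be correct as stated. Your concern about normalizing the immersed surface to meet the JSJ tori essentially is legitimate but standard and not where the difficulty lies.
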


\noindent
{\bf Remark.}
(1). In \cite{HRW}, Hass, Rubinstein and Wang proved that $N_g(M)$ is finite, but no bound on $N_g(M)$ is given in \cite{HRW}.  Recently Zhang \cite{Zh} extended the techniques in \cite{HRW} and proved that $N_g(M)$ is bounded by $c(M)g^3$ for some constant $c(M)$ that depends on $M$.

(2).  The coefficient $C(M)$ depends on $M$.  One would hope for a quadratic bound independent of $M$, but even for embedded surfaces, it seems difficult to obtain such a bound if $M$ contains essential annuli.  Nevertheless, the coefficients $C(M)$ and $C'(M)$ can be algorithmically determined, see Remark~\ref{Rlast}.

(3). When $\partial M$ is a high genu surface, there are finiteness
and infiniteness results in both embedded and immersed case, see
\cite{SWu}, \cite{Qi}, \cite{HWZ}, \cite{La} and \cite{QW}.

\section{Some crucial facts}

The proof of Theorem \ref{Tmain} relies on a theorem of
Hass-Rubinstein-Wang \cite{HRW}, a theorem of Culler-Shalen
\cite{CS}, and  Li's extension of Hatcher's argument
\cite{Li2}. Propositions \ref{hyperbolic}, \ref{2-slopes} and
\ref{Lsign} below are their variations, presented in the
forms we need.

In this section we first consider a hyperbolic 3-manifold $M$ with possibly more than one cusp.  We denote by
$M_\text{max}$ the interior of $M$ with a system of maximal cusps
removed. Now we identify $M$ with $M_\text{max}$, then $\partial M$
has a Euclidean metric induced from the hyperbolic metric and each closed
Euclidean geodesic in $\partial M$ has length at least 1 (see
\cite{Ad} for detail).

\begin{prop}\label{hyperbolic}
Suppose $M$ is a hyperbolic 3-manifold as above and $T$ is a component of $\partial M$.  Suppose $F$ is an
essential immersed surface of genus $g$ in $M$ and let
$c_1,..., c_n$ be the components of $\partial F\cap T$.
\begin{enumerate}
\item If we identify $M$ with $M_\text{max}$, then
$$\Sigma _{i=1}^n L(c_i)\le -2\pi \chi(F),$$
where $L(c_i)$ is the length of an Euclidean geodesic homotopic to
$c_i$ in $T$.

\item Let $S$ be an embedded essential surface in $M$ and let $\gamma$ be a component of $\partial S\cap T$.  Then there is a number $C_S$ which can be expressed as an explicit function of $\chi(S)$, such that 
$$|\gamma\cap\partial F|\le -C_S\cdot\chi(F),$$ 
where $|\gamma\cap\partial F|$ is the minimum number of intersection points of $\gamma$ and $\partial F$.

\item There are two distinct  essential circles $\Gamma_1$ and
$\Gamma_2$ in $T$, such that
$$|\Gamma_j\cap\partial F| \le -C\chi(F)$$ for some constant $C$, where $|\Gamma_j\cap
\partial F|$ is the minimum number of intersection points of $\Gamma_j$ and $\partial F$
up to isotopy, $j=1,2$.

\end{enumerate}
\end{prop}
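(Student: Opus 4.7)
The plan is to prove part (1) first as the core area--length inequality, and then to deduce parts (2) and (3) from it using the Euclidean geometry of $T\subset\partial M_{\max}$.

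For part (1), I would replace the map $f\colon F\to M$ by a geometric representative $f'$ in its relative homotopy class. Concretely, homotope $f|_{\partial F}$ so that each $c_i$ is the closed Euclidean geodesic on its horotorus, and then pull $f$ tight to a pleated or least-area map with this boundary. The intrinsic induced metric on $f'(F)$ has curvature $\le -1$, so Gauss--Bonnet gives $\text{Area}(f'(F))\le -2\pi\chi(F)$. On the other hand, at each maximal cusp the preimage under $f'$ is a disjoint union of annular collars, one wrapping around each $c_i$. A direct computation in the upper half-space horoball model shows that such a collar has hyperbolic area equal to $L(c_i)$, since in the cusp coordinate the horocyclic length decays like $e^{-t}$ and $\int_0^\infty L(c_i)e^{-t}\,dt=L(c_i)$. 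Because the maximal cusps are pairwise disjoint, the collar areas add, yielding $\sum_i L(c_i)\le -2\pi\chi(F)$.

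For parts (2) and (3), I would isotope $\gamma$ and each $c_i$ to its Euclidean geodesic representative on $T$. Two essential closed geodesics of lengths $L,L'$ on a flat torus, meeting at angle $\theta$, realize their minimal geometric intersection number $LL'\sin\theta/\text{Area}(T)$: if $v,v'$ are the lattice vectors representing the two slopes then $|v\times v'|=LL'\sin\theta$ equals $\text{Area}(T)$ times the algebraic intersection number. Applying this component-by-component,
\begin{equation*}
|\gamma\cap\partial F|\le\sum_{i}\frac{L(\gamma)L(c_i)}{\text{Area}(T)}\le\frac{-2\pi L(\gamma)}{\text{Area}(T)}\,\chi(F)
\end{equation*}
by part (1). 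For part (2), applying part (1) to $S$ gives $L(\gamma)\le -2\pi\chi(S)$, so one may take $C_S:=4\pi^{2}(-\chi(S))/\text{Area}(T)$, which is an explicit function of $\chi(S)$. For part (3), let $\Gamma_1,\Gamma_2$ be two shortest non-parallel essential geodesics of $T$; their lengths depend only on the Euclidean structure of $T$, which is a fixed datum of $M$, so $C:=2\pi\max\{L(\Gamma_1),L(\Gamma_2)\}/\text{Area}(T)$ suffices.

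The main obstacle is justifying the geometric representative in part (1) when $F$ is only $\pi_1$-injectively immersed and carries boundary on horotori. Closed essential immersed surfaces admit pleated representatives by Thurston's construction, but here one needs a pleated or least-area surface with prescribed horocyclic boundary on possibly several cusps, together with the matching area estimate for each cusp collar. This is precisely the content of the Hass--Rubinstein--Wang argument cited in the introduction, so the plan is to invoke, and if necessary slightly specialize, their construction rather than reprove it from scratch.
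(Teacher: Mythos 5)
Your argument is essentially the paper's: part (1) is delegated to Hass--Rubinstein--Wang exactly as in the paper, and your flat-torus computation $|c_i\cap\gamma|\le L(c_i)L(\gamma)/\mathrm{Area}(T)$ is the same lattice estimate the paper phrases via the height of a fundamental parallelogram over a lift of $\gamma$. The one shortfall is in the form of your constants: $C_S=4\pi^2(-\chi(S))/\mathrm{Area}(T)$ is not yet ``an explicit function of $\chi(S)$'' as the statement requires, and the constant $C$ in part (3) is meant to be independent of $M$ (the paper gets $4\pi/\sqrt{3}$). Both are repaired by the inputs you omitted: the Cao--Meyerhoff bound $\mathrm{Area}(T)\ge 3.35$ (giving $C_S=-4\pi^2\chi(S)/3.35$), and, for part (3), Adams's bound that essential Euclidean geodesics on $T$ have length at least $1$, which forces the two shortest independent lattice vectors to meet at angle in $[\pi/3,\pi/2]$ and bounds $L(\Gamma_j)/\mathrm{Area}(T)\le 2/\sqrt{3}$ universally.
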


\begin{proof} Part (1) is proved in \cite{HRW}. 

Now we prove part (2).  Recall we have identified $M$ with $M_\text{max}$, and
$\partial M$ has a Euclidean metric induced from the hyperbolic
metric. We may assume that $\gamma$ and each component $c_i$ of $\partial F$ have been isotoped to be closed Euclidean geodesics in $\partial M$.

Let $p: E^2\to T$ be the universal cover, where $E^2$ is the
Euclidean plane. By lifting $\gamma$ to $E^2$, we get an Euclidean line segment $OO_1$ which projects to $\gamma$.  By part (1), the Euclidean length $L(\gamma)=L(OO_1)$ is at most $-2\pi\chi(S)$.  The covering translations of $O$ form a lattice in $E^2$. Let $O_2$ be a lattice point such that $OO_1$ and $OO_2$ span a fundamental parallelogram $P$ for $T$.  By a theorem of Cao and Meyerhoff (also see Lemma 2.2 of \cite{HRW}), $area(P)\ge 3.35$.

Let $h$ be the distance from $O_2$ to the line $OO_1$.  Since the Euclidean length $L(OO_1)\le -2\pi\chi(S)$ and since $area(P)\ge 3.35$, the height $h\ge\frac{3.35}{L(OO_1)}\ge \frac{3.35}{-2\pi\chi(S)}$.

By lifting $c_i$ to $E^2$, it is easy to see that the
length of $c_i$ is at least $h|c_i\cap \gamma|$.
By part (1), we have
$$h|\gamma\cap
\partial F|
=h \Sigma _{i=1}^n  |c_i\cap \gamma| \le \Sigma
_{i=1}^n L(c_i)\le -2\pi \chi(F).$$ 
So part (2) holds and $C_S=\frac{-4\pi^2\chi(S)}{3.35}$.

The proof of part (3) is similar.  Pick an origin $O$ in $E^2$ and consider the lattice $L$ in $E^2$ given by the covering translations of $O$. Let $O_1$ and $O_2$ be two independent
vertices in $L$ which have the first and second shortest distance
from the origin $O$. Let $\alpha$ be the angle of the triangle
$OO_1O_2$ at $O$ and let $l$, $l_1$, $l_2$ be the lengths of
$O_1O_2$, $OO_1$ and $OO_2$ respectively. By our assumptions above and by a theorem in \cite{Ad} mentioned earlier, we have $l\ge
l_2 \ge l_1\ge 1$.  This implies that $\alpha\ge \pi/3$. Furthermore, we can
assume that $\alpha\le \pi/2$, because otherwise we can replace one of the
vertices by its inverse.

$OO_1$ and $OO_2$ span a fundamental parallelogram $P$ for $T$.  It follows from our assumptions above that
$l_1sin\alpha$ (resp. $l_2sin\alpha$), the height of $P$ over $OO_2$
(resp. over $OO_1$), is at least $\frac{\sqrt 3} 2$.
Let $\Gamma_j=p(OO_j)$, $j=1,2$. As in part (2), we have
$$\frac{\sqrt 3} {2}|\Gamma_j\cap
\partial F|
=\frac{\sqrt 3} {2} \Sigma _{i=1}^n  |c_i\cap \Gamma_j| \le \Sigma
_{i=1}^n L(c_i)\le -2\pi \chi(F),$$ and part (3) follows with $C=\frac{4\pi}{\sqrt{3}}$.
\end{proof}

\begin{prop} \label{2-slopes}
Suppose $M$ is a hyperbolic 3-manifold as above and $T$ is a component
of $\partial M$.  Then $T$ has two distinct boundary slopes $c_1$ and
$c_2$ of embedded essential surfaces, i.e., there are
properly embedded essential surfaces $F_i$ in $M$ such that $F_i\cap
T$ is a multiple of $c_i$, $i=1,2$.
\end{prop}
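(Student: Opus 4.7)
The proposition is essentially a theorem of Culler and Shalen~\cite{CS}; the plan is to invoke their character variety machinery. Since $M$ is hyperbolic of finite volume, the discrete faithful representation gives a character $\chi_0\in X(M)$ lying on an irreducible component of the $PSL_2(\mathbb C)$-character variety of complex dimension equal to the number of cusps of $M$. I would cut down to an irreducible algebraic curve $C\subset X(M)$ through $\chi_0$---chosen carefully when $M$ has several cusps so that the peripheral trace functions associated to $T$ are nonconstant on $C$---and pass to its smooth projective completion $\tilde C$.

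At each ideal point of $\tilde C$ the Culler-Shalen construction produces a properly embedded essential surface $F\subset M$. Its intersection with $T$ is a multiple of a single slope, and that slope can be read off from the trace functions: a primitive peripheral element $\alpha\in\pi_1(T)$ realises the slope at an ideal point $\tilde x$ precisely when $I_\alpha$ remains bounded at $\tilde x$. Following \cite{CS}, define a function on $H_1(T;\mathbb R)\cong\mathbb R^2$ by $\|\alpha\|=\deg_{\tilde C}(I_\alpha^2-4)$. The key claim is that this Culler-Shalen seminorm is a genuine norm; granting this, its unit ball is a centrally symmetric convex polygon in $\mathbb R^2$, each pair of antipodal vertices of which is the projective class of a boundary slope of an embedded essential surface arising from an ideal point of $\tilde C$. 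Since a centrally symmetric convex polygon in the plane has at least two pairs of antipodal vertices, $T$ must carry at least two distinct boundary slopes $c_1,c_2$.

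The main obstacle is verifying the positive definiteness of $\|\cdot\|$, i.e., that for every nonzero primitive $\alpha\in H_1(T;\mathbb Z)$ the trace function $I_\alpha$ is nonconstant on $\tilde C$. At $\chi_0$ the peripheral elements of $\pi_1(T)$ are parabolic with trace $\pm 2$, and an infinitesimal shear of the cusp at $T$ changes these traces, so $I_\alpha$ is forced to be nonconstant along any curve through $\chi_0$ that realises this shear deformation. For a single cusp this is automatic from the dimension count; when $M$ has additional cusps the delicate step is choosing $C$ transverse to the deformations that fix the shape at $T$, so that the $T$-shear direction is actually present on $C$. This is exactly the input needed to promote the seminorm to a norm and close the argument.
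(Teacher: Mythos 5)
Your route is genuinely different from the paper's, but it has a gap at exactly the place you flag as ``the delicate step,'' and the justification you sketch for that step rests on a false premise. The paper never touches the character variety of the multi-cusped $M$: it performs hyperbolic Dehn filling (Thurston) on every component of $\partial M\setminus T$ to get a one-cusped hyperbolic $M^*$ with $\partial M^*=T$, quotes the Culler--Shalen theorem \cite{CS} for $M^*$ as a black box to obtain two slopes realized by embedded essential surfaces $F_i^*$, and sets $F_i=F_i^*\cap M$. Your plan instead requires a multi-cusped version of the Culler--Shalen norm argument, whose entire burden is the existence of an irreducible algebraic curve $C\ni\chi_0$ on which $I_\alpha$ is nonconstant for \emph{every} nontrivial $\alpha\in\pi_1(T)$. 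You do not establish this, and the heuristic offered---``an infinitesimal shear of the cusp at $T$ changes these traces''---is wrong at first order: at the discrete faithful character every peripheral trace function is critical, since in Neumann--Zagier-type coordinates $I_\alpha=\pm 2\cosh(u_\alpha/2)=\pm 2+O(u_\alpha^2)$, so $dI_\alpha(\chi_0)=0$ in every direction and $f_\alpha=I_\alpha^2-4$ vanishes to order two at $\chi_0$. For the same reason, nonconstancy in the one-cusped case is not ``automatic from the dimension count''; it is a theorem (proved via Thurston's Dehn surgery theorem: infinitely many filled holonomy characters lie on the canonical curve near $\chi_0$, where $\alpha$ becomes loxodromic unless it is a power of the filling slope). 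That Dehn-filling argument does not transfer to an arbitrary curve $C$ cut out of the $k$-dimensional canonical component, because $C$ may miss the filling characters. What does work is the second-order expansion $f_\alpha=(p+q\tau)^2u^2+O(u^3)$ with cusp shape $\tau\notin\mathbb{R}$, valid \emph{provided} the deformation parameter $u$ of the cusp at $T$ is nonconstant along $C$; producing an irreducible $C$ through $\chi_0$ with that property (say by generic hyperplane sections, plus Bertini-type irreducibility) is a genuine piece of work that your write-up only names.

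Two further points. At an ideal point of $\tilde C$ the associated essential surface need not meet $T$ at all (it may be closed, or have boundary only on other cusps); you need the dichotomy that at the ideal points actually contributing to the seminorm some $I_\gamma$ with $\gamma\in\pi_1(T)$ blows up, so $\pi_1(T)$ fixes no vertex of the tree and every associated essential surface meets $T$ in curves of the unique bounded slope. With these points supplied, your argument would close and would even avoid Thurston's Dehn surgery theorem in the reduction step; but it amounts to reproving a multi-cusp Culler--Shalen theorem, whereas the paper's reduction---fill the other cusps hyperbolically and apply \cite{CS} to the resulting one-cusped manifold---is two lines. Unless you need the extra generality, take the paper's route.
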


\begin{proof} By performing hyperbolic Dehn filling on each boundary
component of $\partial M\setminus T$, we get a hyperbolic 3-manifold
$M^*$ with $\partial M^*=T$. By a theorem of Culler-Shalen \cite{CS},
there are two distinct boundary slopes $c_1$ and $c_2$ on $T$, i.e.
there are properly embedded essential surfaces $F_i^*$ in $M^*$ such
that $F_i^*\cap T$ is a multiple of $c_i$, $i=1,2$. So
$F_i=F_i^*\cap M$ has the required property.
\end{proof}

A surface in a Seifert fiber space is said to be horizontal if it is transverse to the $S^1$-fibers.
If an orientable Seifert fiber space has a single boundary component, then it is easy to see that all embedded horizontal surfaces have the same slope which is determined by its Euler number.  The following Lemma is a generalization of this fact to immersed horizontal surfaces in a Seifert fiber space with more than one boundary component.

\begin{prop}\label{Lsign}
Let $N$ be an orientable  Seifert fiber space with boundary and $T$
a boundary component of $N$.  Let $F_1$ and $F_2$ be immersed
essential horizontal surfaces in $N$.  Suppose $F_i\cap T$ is
embedded for both $i=1,2$ and $|\partial F_1\cap\partial F_2\cap T|$
is minimal in the isotopy classes of $F_1$ and $F_2$.  If there is a
double curve $\alpha\subset F_1\cap F_2$ with both endpoints in $T$,
then the curves of $F_1\cap T$ and $F_2\cap T$ must have the same
slope in $T$.
\end{prop}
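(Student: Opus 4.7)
I plan to argue by contradiction via an orientation-sign analysis. Suppose the slopes of $F_1\cap T$ and $F_2\cap T$ differ. Since $N$ is orientable and the $F_i$ are horizontal surfaces in this orientable Seifert fiber space, each $F_i$ is orientable; fix orientations on $N$, $F_1$, and $F_2$. This induces boundary orientations on $c_i:=\partial F_i\cap T$ and the standard intersection orientation on the oriented transverse $1$-submanifold $F_1\cap F_2\subset N$.

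The first key observation is that, by the minimality hypothesis, the collections $c_1,c_2$ of parallel simple closed curves on the torus $T$ are in minimal position. Since they lie in distinct slope classes, the number of intersection points equals the absolute value of the algebraic intersection $|[c_1]\cdot [c_2]|$, and consequently every local intersection sign on $T$ is the same constant $\epsilon\in\{+1,-1\}$. The second key observation is a local sign correspondence at boundary points of $F_1\cap F_2$: for any $x\in F_1\cap F_2\cap T$, the sign of $c_1\cdot c_2$ at $x$ on $T$ coincides with the sign of the component of the oriented tangent of $F_1\cap F_2$ at $x$ transverse to $T$ (i.e., whether it points into $\Int N$ or out of it). This is a direct orientation calculation in $T_xN$ using the boundary convention for $T\subset\partial N$, the boundary convention for $c_i$ in $F_i$, and the standard convention for orienting transverse intersections of two oriented surfaces in an oriented $3$-manifold.

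Now consider the double curve $\alpha$ with endpoints $p,q\in T$. Along $\alpha$ the oriented tangent of $F_1\cap F_2$ is continuous; at $p$ it points into $\Int N$ (since $\alpha$ leaves $T$ there) and at $q$ it points out of $\Int N$ (since $\alpha$ returns to $T$). By the sign correspondence, the intersection signs of $c_1\cdot c_2$ on $T$ at $p$ and at $q$ are opposite, contradicting the first observation that both equal $\epsilon$. Therefore the slopes of $F_1\cap T$ and $F_2\cap T$ must coincide. The main obstacle I anticipate is the verification of the sign correspondence in the second observation: this is a routine but sign-sensitive bookkeeping exercise tying together the boundary orientation of $T\subset\partial N$, the boundary orientations of each $c_i$ in $F_i$, and the induced orientation of $F_1\cap F_2$, all of which must be carried out consistently at both endpoints of $\alpha$.
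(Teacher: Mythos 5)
Your overall strategy is the same as the paper's (by minimality all intersection signs of $\partial F_1\cap\partial F_2$ on $T$ agree; the two ends of $\alpha$ carry opposite signs; contradiction), but there is a genuine gap in your first key observation, and it sits exactly where horizontality has to do real work. First, the claim that a horizontal surface in an orientable Seifert fiber space is orientable is false: a horizontal surface is an orbifold cover of the base, which may be non-orientable (e.g.\ a horizontal M\"obius band in the orientable circle bundle over the M\"obius band), so you cannot in general fix orientations on $F_1,F_2$. Second, and more seriously, even when $F_i$ is orientable the boundary orientation of $c_i=\partial F_i\cap T$ induced by an orientation of $F_i$ need not orient the components of $c_i$ coherently in $H_1(T)$: the sign comparing the positive normal of $F_i$ with a fixed fiber orientation in $T$ is constant on each boundary component but can change from one component to another when the base orbifold is non-orientable (a horizontal annulus double-covering a M\"obius-band base already exhibits this). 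If the components of $c_1$ are not coherently oriented, then $|c_1\cap c_2|\neq|[c_1]\cdot[c_2]|$ even in minimal position, the ``all local signs equal $\epsilon$'' conclusion fails, and the endpoint contradiction evaporates. A sanity check shows some such input is unavoidable: a vertical essential annulus is embedded and orientable, and paired with a horizontal surface in a one-cusped Seifert piece it satisfies every hypothesis of your argument except horizontality while the conclusion is false --- and what saves your argument from proving this false statement is precisely that the two boundary fibers of an oriented annulus are anti-parallel, i.e.\ precisely the coherence you did not verify.

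The paper circumvents both problems by never orienting $F_1$ and $F_2$: it orients each curve of $\partial F_i\cap T$ using the fiber direction in $T$ together with the normal direction of $\partial N$, which is possible exactly because $F_i$ is horizontal, and with this convention the components of $\partial F_i\cap T$ are automatically coherent in $H_1(T)$, so minimality does force all intersection signs to agree. The price is that your second key observation (the standard oriented-boundary-of-$F_1\cap F_2$ bookkeeping) no longer applies verbatim, since the curves are no longer oriented as boundaries of oriented surfaces; one must check directly that the two ends of $\alpha$ give oppositely signed points with respect to the fiber-induced orientations. That is the local configuration analysis at $\partial\alpha$ in the paper (its Figure~1), and it is the second place horizontality enters. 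To repair your proof you must either (i) show that for the surfaces at hand the two orientation conventions differ by a single global sign on each $F_i$ (which is exactly the coherence statement in question), or (ii) adopt the fiber-induced orientation and redo the endpoint sign computation as the paper does.
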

\begin{proof}
The proof of the lemma is basically an argument first used by Hatcher in \cite{Ha} and then extended to immersed surfaces in \cite{Li2}.  As $N$ is a Seifert fiber space, we can fix a direction for the $S^1$--fibers of $N$ in $T$.
Since $N$ is orientable and each $F_i$ is horizontal, the normal direction of $\partial N$ and the orientation of the $S^1$--fibers in $T$ uniquely determine an orientation for every curve of $\partial F_1\cap T$ and $\partial F_2\cap T$.  Since $F_i\cap T$ is embedded, every  component of $\partial F_i\cap T$ ($i=1\ or\ 2$) with this induced orientation represents the same element in $H_1(T)$.  If $\partial F_1\cap T$ and $\partial F_2\cap T$ have different slopes, they must  have a nonzero intersection number.  Moreover, since we have assumed $|\partial F_1\cap\partial F_2\cap T|$ is minimal in the isotopy classes of $F_1$ and $F_2$, the signs of the intersection points of $\partial F_1\cap\partial F_2\cap T$ (with respect to the directions above) are the same, either all positive or all negative.

Let $\alpha\subset F_1\cap F_2$ be an intersection arc with both endpoints in $T$.  One can easily list all possible configurations of the directions of the $S^1$-fibers at $\partial\alpha$ and the induced orientations of $\partial F_1$ and $\partial F_2$.  However,  since each $F_i$ is horizontal, only two possible configurations can happen, see Figure~\ref{Fsign}.  In either case, the two ends of $\alpha$ give points of $\partial F_1\cap\partial F_2\cap T$ with opposite  signs of intersection.  This contradicts our conclusion on the sign of the intersection points above.  So $F_1\cap T$ and $F_2\cap T$ must have the same slope in $T$.

\begin{figure}[h]
\begin{center}
\psfrag{S1}{$F_1$} \psfrag{S2}{$F_2$} \psfrag{fd}{fiber direction}
\includegraphics[width=4.5in]{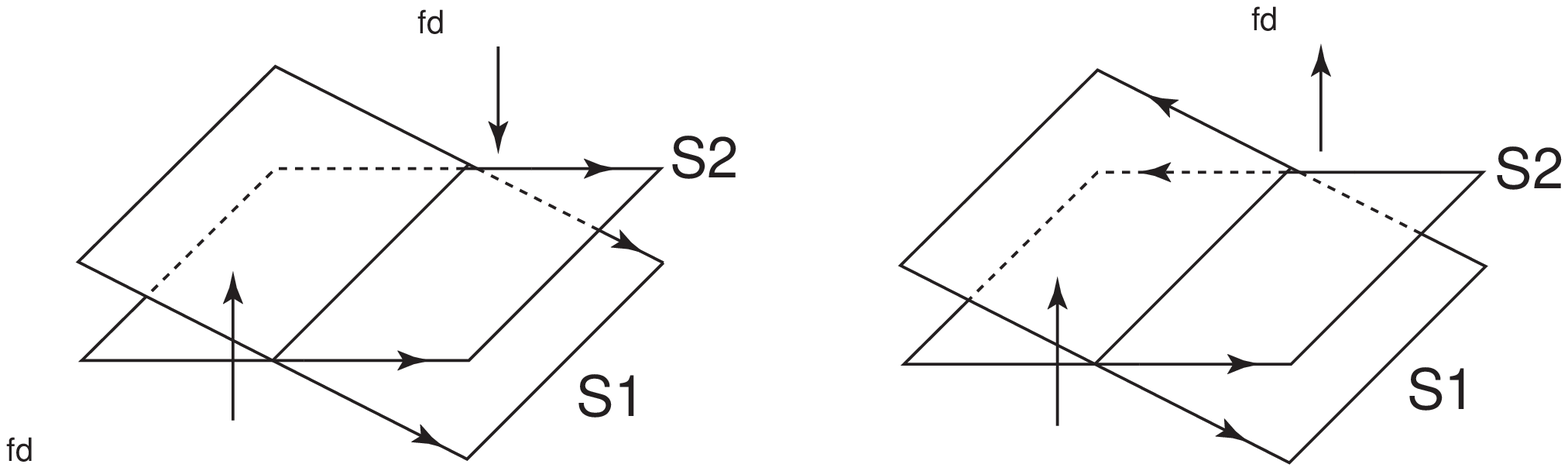}
\caption{} \label{Fsign}
\end{center}
\end{figure}
\end{proof}

The following fact follows immediately from Lemma~\ref{Lsign}.  Since an  essential surface in a Seifert fiber space is either vertical or horizontal \cite{H}, if $M$ is an orientable Seifert fiber space with a single boundary component, this means that only two possible slopes can be realized by immersed essential surfaces, one vertical and one horizontal.

\begin{col}\label{Csame}
Let $N$ be an orientable Seifert fiber space with a single boundary torus.  Then all immersed horizontal surfaces with respect to a fixed Seifert structure have the same slope in $\partial N$.
\end{col}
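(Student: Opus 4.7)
The plan is to apply Proposition~\ref{Lsign} to any two immersed essential horizontal surfaces $F_1,F_2$ in $N$ and derive a contradiction from assuming their boundary slopes in $T=\partial N$ differ.

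First I would carry out a preliminary reduction so the hypotheses of Proposition~\ref{Lsign} are met. Because $F_i$ is horizontal and $T$ is the unique boundary component of $N$, the projection $F_i\to B$ to the base orbifold of the fibration is a (possibly branched) covering, so $\partial F_i$ covers $\partial B$; lifting to a finite fiber-preserving cover of $N$ in which $F_i$ becomes embedded shows that all components of $\partial F_i$ share a single slope $s_i$ in $T$, determined by the rational Euler number of the fibration. A small homotopy of the immersion near $T$ then makes $F_i\cap T$ a disjoint union of embedded parallel simple closed curves of slope $s_i$, as required by Proposition~\ref{Lsign}.

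Next I would put $F_1$ and $F_2$ in general position in $N$ and isotope within their homotopy classes so that $|\partial F_1\cap\partial F_2\cap T|$ is minimal. Assume for contradiction that $s_1\ne s_2$. Then $\partial F_1$ and $\partial F_2$ have strictly positive geometric intersection in $T$, so I may choose a point $p\in\partial F_1\cap\partial F_2\cap T$. Since $p$ is a transverse boundary intersection of the two surfaces, it is an endpoint of some component of $F_1\cap F_2$, and that component must therefore be an arc $\alpha$; because each $\partial F_i\subset T$, the other endpoint of $\alpha$ also lies in $T$. Applying Proposition~\ref{Lsign} to the double arc $\alpha$ then forces $s_1=s_2$, a contradiction, and the corollary follows.

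The main obstacle I expect is the preliminary step, in which one must verify that each immersed essential horizontal surface has a single well-defined boundary slope and that $F_i\cap T$ can be made embedded; once these are in hand, everything downstream is an immediate consequence of Proposition~\ref{Lsign}.
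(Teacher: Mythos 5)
Your main argument is exactly the one the paper intends: the paper offers no separate proof, asserting only that the corollary ``follows immediately'' from Proposition~\ref{Lsign}, and your instantiation --- minimize $|\partial F_1\cap\partial F_2\cap T|$, observe that a nonzero geometric intersection of the boundaries forces a double curve that is an arc, note that since $\partial N$ is a single torus both endpoints of that arc lie in $T$, and then invoke Proposition~\ref{Lsign} for the contradiction --- is precisely that immediate argument. One caution about your preliminary step: the claim that \emph{every} immersed essential horizontal surface automatically has all boundary components of a single slope determined by the Euler number is not true in general (for instance, in $\Sigma\times S^1$ with $\Sigma$ a once-punctured torus, a degree-two cover of $\Sigma$ with two boundary circles can be made horizontal with boundary classes $(1,e)$ and $(1,-e)$, which are distinct slopes for $e\neq 0$), and the lifting-to-a-finite-cover justification does not repair this. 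Fortunately this over-reach is unnecessary: in the context of the paper the corollary concerns surfaces that \emph{realize} a slope, i.e.\ whose boundary is a multiple of a single slope, which is exactly the ``$F_i\cap T$ embedded'' hypothesis of Proposition~\ref{Lsign}, so your argument goes through once that reading is adopted.
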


\section{Construct a surface of reference}\label{Scon}

Let $M$ be as in Theorem~\ref{Tmain}.
First note that we may assume $M$ is irreducible, since if $M$ is reducible we can use the prime factor of $M$ that contains $\partial M$ and the proof is the same.  Since the hyperbolic case is proved in \cite{HRW} and the Seifert fiber case is trivial (see Corollary~\ref{Csame}), we may assume $M$ has a nontrivial JSJ decomposition.

Let $\mathcal{T}$ be the set of JSJ
decomposition tori of $M$.  We call the closure (under path metric)
of each component of $M-N(\mathcal{T})$ a JSJ piece. Let $M_0$ be
the JSJ piece that contains the torus $\partial M$.

In this section, we suppose $M_0$ is a Seifert fiber space and  we
will use the JSJ structure of $M$ to construct a surface of
reference for counting the boundary slopes of immersed essential
surfaces.  This surface is in $M_0$ and is not a proper surface in
$M$.

 For any Seifert fiber space $N$
with boundary, we call a slope in a boundary torus the
\emph{vertical slope} if it is the slope of a regular fiber of $N$.

\begin{prop}\label{Phor}
Let $N$ be a Seifert fiber space and $T_0$, $T_1,\dots T_n$ the
boundary tori of $N$.  Let $s_i$ ($i=1,\dots n$) be any slope in $T_i$
that is not vertical in $N$.  Then there is an embedded horizontal surface
in $N$ realizing each slope $s_i$ in $T_i$.
\end{prop}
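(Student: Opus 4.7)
The plan is to Dehn fill each $T_i$ (for $i\ge 1$) along the slope $s_i$, produce an embedded horizontal surface in the resulting Seifert fiber space, and take its intersection with $N$. The non-verticality of the slopes $s_i$ is precisely the condition that lets the Seifert fibration extend across each filling, and it also forces the filled solid tori to meet the horizontal surface in meridian disks whose boundaries realize the desired slopes.

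More concretely, for each $i\ge 1$ let $V_i$ be a solid torus attached to $T_i$ with meridian $s_i$. Since $s_i$ is not the vertical slope, its geometric intersection number $\alpha_i$ with a regular fiber on $T_i$ is a positive integer, and a standard argument shows that the Seifert fibration of $N$ extends uniquely over $V_i$ with core a (possibly exceptional) fiber of order $\alpha_i$. Performing these fillings simultaneously produces an orientable Seifert fiber space $N'$ whose only boundary torus is $T_0$ and whose fibration restricts on $N\subset N'$ to the given one.

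Because $N'$ is an orientable Seifert fiber space with nonempty boundary, the classical theory of Seifert fiber spaces (cf.\ \cite{H}) provides an embedded horizontal surface $F'$ in $N'$. Set $F=F'\cap N$; since $F'$ is transverse to the fibers of $N'$, the restriction $F$ is transverse to the fibers of $N$ and hence is horizontal. For each $i\ge 1$, the intersection $F'\cap V_i$ is an embedded horizontal surface in the Seifert-fibered solid torus $V_i$. Lifting to the standard $\alpha_i$-fold branched cover $D^2\times S^1\to V_i$ that unwraps the core fiber identifies each component of $F'\cap V_i$ with the image of a meridian disk, so its boundary on $T_i$ is a curve of slope $s_i$. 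Therefore $F$ is an embedded horizontal surface in $N$ realizing the slope $s_i$ on each $T_i$, as required.

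I expect the only delicate step to be the local claim that embedded horizontal surfaces in a Seifert-fibered solid torus are meridian disks whose boundary is the meridian of the solid torus; once this is granted, everything else reduces to standard Seifert bookkeeping together with the classical existence of horizontal surfaces in Seifert fiber spaces with nonempty boundary.
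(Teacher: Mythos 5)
Your proof is correct and follows essentially the same route as the paper: Dehn fill each $T_i$ along $s_i$, use non-verticality to extend the Seifert fibration, take an embedded horizontal surface in the filled manifold, and restrict to $N$. The extra detail you supply about the filled solid tori meeting the horizontal surface in meridian disks is a reasonable elaboration of the paper's final sentence.
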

\begin{proof}
We perform Dehn fillings along each slope $s_i$ ($i=1,\dots n$) and let $\hat{N}$ be the resulting manifold.  So $\partial \hat{N}=T_0$. Since $s_i$ is not vertical in $N$, the Seifert structure of $N$ extends to $\hat{N}$.  Hence $\hat{N}$ is a Seifert fiber space with boundary.  Every Seifert fiber space with boundary has an embedded horizontal surface.  The restriction of a horizontal surface of $\hat{N}$ to $N$ is a horizontal surface of $N$ realizing each slope $s_i$ in $T_i$ ($i=1,\dots n$).
\end{proof}

Let $M_0$ be the Seifert JSJ piece of $M$ as above.  Let $\partial M$, $T_1,\dots, T_n$ be the boundary tori of $M_0$.  So each $T_i$ can be viewed as a JSJ torus in $\mathcal{T}$ and $M_0$ is a JSJ piece on one side of $T_i$.  Next we fix a slope in $T_i$ according the JSJ piece on the other side of $T_i$.  Let $M_i$ be the JSJ piece on the other side of $T_i$.  Note that $M_i$ is the same as $M_0$ if $T_i$ is glued to some $T_j$ in $M$.  We fix a slope $s_i$ for each boundary component $T_i$ of $M_0$ as follows.

\vspace{10pt}

\noindent
\emph{Case 1}. $M_i$ is a Seifert fiber space and $M_i$ is not a twisted $I$-bundle of a Klein bottle.  In this case we choose the slope $s_i$ of $T_i$ to be the slope of a regular fiber of $M_i$.  Note that $s_i$ is not a vertical slope for $M_0$, because otherwise the regular fibers of $M_0$ and $M_i$ match and $M_0\cup_{T_i} M_i$ is a Seifert fiber space, which contradicts the hypothesis that $T_i$ is a JSJ torus.  So $s_i$ is not a vertical slope for $M_0$.

\vspace{10pt}

\noindent
\emph{Case 2}.  $M_i$ is a twisted $I$-bundle over a Klein bottle.  In this case, $M_i$ has two different Seifert structures \cite{Ja}.  For any point $x\in T_i=\partial M_i$, we define $p(x)$ to be the other endpoint of the $I$-fiber of $M_i$ that contains $x$.  Let $\gamma_\nu$ be a simple closed curve in $T_i$ which is a regular fiber of $M_0$.  Let $s_i$ be the slope of $p(\gamma_\nu)$.  Note that $\gamma_\nu$ and $p(\gamma_\nu)$ bound an immersed essential annulus in $M_i$.  If $\gamma_\nu$ and $p(\gamma_\nu)$ have the same slope in $T_i$, i.e. $\gamma_\nu\cup p(\gamma_\nu)$ bounds an embedded annulus, then we can choose a Seifert structure for $M_i$ \cite{Ja} so that $\gamma_\nu$ is also a regular fiber for $M_i$ and hence $M_0\cup M_i$ is a Seifert fiber space, a contradiction to the hypothesis that $T_i$ is a JSJ torus.  So $s_i$ is not a vertical slope for $M_0$.

\vspace{10pt}

\noindent \emph{Case 3}. $M_i$ is hyperbolic.  By Proposition
\ref{2-slopes}, $T_i$ has at least two boundary slopes (of embedded
essential surfaces in $M_i$). In this case we choose $s_i$ to be a
boundary slope of $M_i$ that is not a vertical slope in $M_0$.  So
there is an embedded essential surface $S_i$ in $M_i$ whose boundary
in $T_i$ has slope $s_i$ and $s_i$ is not a vertical slope in $M_0$.

\vspace{10pt}

By Proposition~\ref{Phor}, $M_0$ contains a properly embedded horizontal surface $S$ such that the slope of $\partial S\cap T_i$ is the slope $s_i$ described above.  Note that $S$ is not a properly embedded surface in $M$, since two tori $T_i$ and $T_j$ ($i\ne j$) may be glued together in $M$ and $s_i$ and $s_j$ may not match in the corresponding JSJ torus of $M$.

Next we fix the surface $S$ in the construction above.  Let $\mu$ the slope of $S\cap\partial M$ in the torus $\partial M$ and let $\nu$ be the vertical slope of $\partial M$ with respect to the Seifert structure of $M_0$.

\section{Proof of the Theorem \ref{Tmain}}

Let $F$ be a proper immersed essential surface of genus $g$ in $M$.

If $M_0$ is hyperbolic then  Theorem~\ref{Tmain} follows from
\cite{HRW}. More precisely, suppose $\partial F$ is an $n$ multiple
of a slope $c$ in $\partial M$ and  we have identified $M_0$ with
the metric space $M_{0 \text{max}}$ as in Proposition \ref{hyperbolic}. By Proposition \ref{hyperbolic}
(1), we have
$$n L(c)\le L(\partial(F\cap M_0))\le -2\pi \chi(F\cap M_0)\le -2\pi \chi(F) = 2\pi(2g-2+n).$$
Then as discussed in \cite{HRW} we have $L(c)\le 2\pi$ if $g=0$ and
$L(c)\le 2g\pi$ if $g>0$, therefore $N_g(M)\le C'$ for $g=0$ and
$N_g(M)\le Cg^2$ for some constants $C'$ and $C$ independent of $M$.

Below we assume that $M_0$ is a Seifert fiber space.

We may assume the slope of $\partial F$ is not the vertical slope of
$M_0$, so $F\cap M_0$ is horizontal in $M_0$.  Since $\partial M$ is
incompressible, $F$ is not a disk.  If $F$ is an annulus, then
$F\cap M_0$ is a horizontal annulus.  The only orientable Seifert
fiber space that admits a horizontal annulus is either
$T^2\times I$ or a twisted $I$-bundle over a Klein bottle.  Since
$M_0$ is a JSJ piece, $M_0$ is not $T^2\times I$.  If $M_0$ is a
twisted $I$-bundle over a Klein bottle, $M_0=M$ and by
Corollary~\ref{Csame} there are only two possible slopes for $F$.  Thus
Theorem~\ref{Tmain} holds if $\chi(F)\ge 0$. So in this section, we
assume $\chi(F)<0$.

\begin{lem}\label{Lver}
Let $N$  be a Seifert JSJ piece of $M$ and $v$ a regular fiber of $N$. Suppose $N$ is not a twisted $I$-bundle over a Klein bottle.  Let $F$ be an essential surface in $M$ and suppose $F\cap N$ is horizontal in $N$.  Then $|v\cap F|\le -6\chi(F)$.
\end{lem}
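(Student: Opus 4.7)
The plan is to identify $|v\cap F|$ with the degree of an orbifold covering of the base 2-orbifold of $N$, and then bound that degree via the Euler characteristic.

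I would first homotope $F$ to be transverse to $\partial N$ with $|F\cap\partial N|$ minimal. Since $N$ is a JSJ piece, $\partial N$ is incompressible in $M$; combined with the hypothesis that $F$ is essential, each component of $F\cap N$ and of $F\cap\overline{M\setminus N}$ is then essential, and in particular has non-positive Euler characteristic. Since the circles of $F\cap\partial N$ have $\chi=0$, additivity gives $\chi(F)=\chi(F\cap N)+\chi(F\cap\overline{M\setminus N})$, whence $-\chi(F\cap N)\le-\chi(F)$.

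Next, let $p\colon N\to\mathcal{O}$ be the Seifert quotient onto the base 2-orbifold. Because $F\cap N$ is horizontal, $p$ restricts to an orbifold covering $F\cap N\to\mathcal{O}$ of some total degree $d$. A regular fiber $v$ lies inside $N$ and meets the preimage of its (regular) image point in exactly $d$ points, so $|v\cap F|=|v\cap(F\cap N)|=d$. Multiplicativity of orbifold Euler characteristic gives $\chi(F\cap N)=d\cdot\chi(\mathcal{O})$.

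The core step is to show $-\chi(\mathcal{O})\ge 1/6$. Since $N$ admits a horizontal surface, $\partial\mathcal{O}\ne\emptyset$, so $\chi(\mathcal{O})=\chi(|\mathcal{O}|)-\sum_{j}(1-1/\alpha_j)\le 0$. The hypotheses that $N$ is a JSJ piece and not a twisted $I$-bundle over a Klein bottle eliminate each orbifold with $\chi(\mathcal{O})=0$: solid tori ($|\mathcal{O}|=D^2$ with $\le 1$ cone point), $T^2\times I$ ($|\mathcal{O}|$ an annulus with no cone point), and the twisted $I$-bundle over a Klein bottle ($|\mathcal{O}|=D^2(2,2)$ or a Möbius band). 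A finite case check among the remaining orbifolds with boundary then shows that the maximum of $\chi(\mathcal{O})$ is $-1/6$, attained uniquely by $\mathcal{O}=D^2(2,3)$, and that every other possibility gives $\chi(\mathcal{O})\le -1/4$. Combining,
\[
|v\cap F|=d=\frac{-\chi(F\cap N)}{-\chi(\mathcal{O})}\le 6\bigl(-\chi(F\cap N)\bigr)\le -6\chi(F).
\]

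The main obstacle is the orbifold case analysis producing the sharp constant $1/6$; the other steps are routine once one has the orbifold covering viewpoint and the reduction to pieces along $\partial N$.
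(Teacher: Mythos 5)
Your proposal is correct and follows essentially the same route as the paper: multiplicativity $\chi(F\cap N)=d\cdot\chi(\mathcal{O})$ with $d=|v\cap F|$, the case analysis showing $\chi(\mathcal{O})\le -1/6$ once the JSJ and non-Klein-bottle hypotheses exclude the $\chi(\mathcal{O})=0$ orbifolds, and the reduction $\chi(F\cap N)\ge\chi(F)$. You merely make explicit two points the paper leaves implicit (the Möbius-band base among the $\chi=0$ cases, and the essentiality of the pieces of $F$ cut along $\partial N$ giving $-\chi(F\cap N)\le-\chi(F)$).
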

\begin{proof}
Let $O(N)$ be the base orbifold of $N$.  Since $O(N)$ has  boundary  and $N$ is not a solid torus, $\chi(O(N))\le 0$. Moreover, since $N$ is orientable and is not $T^2\times I$, $\chi(O(N))=0$ if and only if $O(N)$ is a disk with two cone points both of order 2 and $N$ is a twisted $I$-bundle over a Klein bottle.
Thus by our hypothesis that $N$ is not a twisted $I$-bundle over a Klein bottle, we have $\chi(O(N))<0$.

Since $F\cap N$ is horizontal in $N$, $\chi(F\cap N)=k\chi(O(N))$ where $k=|v\cap F|$.  Since $O(N)$ has boundary, the maximal possible value for $\chi(O(N))$ occurs when $O(N)$ is a disk with two cone points of orders 2 and 3 respectively, in which case $\chi(O(N))=-1/6$.  Therefore $\chi(O(N))\le -1/6$ and $k=|v\cap F|\le -6\chi(F\cap N)\le -6\chi(F)$.
\end{proof}
\begin{rem}\label{R2}
In the proof of Lemma~\ref{Lver}, $\chi(O(N))\le -1/2$ except when $O(N)$ is a disk with two cone points.  Thus $|v\cap F|\le -2\chi(F)$ if $\partial N$ has more than one boundary component.  This is a key observation in the proof of the following lemma, see \cite{Zh}.
\end{rem}

\begin{lem}[\cite{Zh}, Lemma 3.2]\label{Lzhang}
Let $M$ and $M_0$ be as in section~\ref{Scon} and let $\nu$ be the vertical slope of $\partial M$ in $M_0$.  Let $F$ be an immersed essential surface in $M$ of genus at most $g$ and let $s_F$ be the boundary slope of $F$ in $\partial M$.  Then the geometric intersection number $$\Delta(\nu,s_F)\le U(g)=\left\{
\begin{array}{cl}
2 & g=0 \\
2g & g\ge 1
\end{array} \right. .$$
\end{lem}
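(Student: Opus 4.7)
The plan is to count $|v\cap F|$ in two complementary ways and apply Lemma~\ref{Lver}. If $s_F=\nu$ then $\Delta(\nu,s_F)=0$ and the bound is trivial, so assume $s_F\neq\nu$; then after isotopy $F\cap M_0$ is horizontal in $M_0$. Because $M$ has a nontrivial JSJ decomposition, $M_0$ has at least two boundary tori, so $M_0$ is not a twisted $I$-bundle over a Klein bottle, and Lemma~\ref{Lver} together with Remark~\ref{R2} gives
\[
|v\cap F|\le -2\chi(F),
\]
where $v$ is a regular fiber of $M_0$.

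On the other hand, pushing $v$ into $\partial M$ realizes it as a curve of slope $\nu$, and $|v\cap F|$ can then be counted at the boundary. Write $b=|\partial F|$, and let $k_i\ge 1$ be the multiplicity with which the $i$-th component of $\partial F$ covers its image slope-$s_F$ curve in $\partial M$. Setting $K=\sum_i k_i\ge b$, one gets $|v\cap F|=K\cdot\Delta(\nu,s_F)$. Combining this with $\chi(F)=2-2g-b$ yields
\[
b\cdot\Delta(\nu,s_F)\le K\cdot\Delta(\nu,s_F)=|v\cap F|\le 2(2g-2+b),\qquad\text{i.e.}\qquad \Delta(\nu,s_F)\le 2+\frac{4g-4}{b}.
\]

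This immediately handles most cases. For $g=0$, essentiality with $\chi(F)<0$ forces $b\ge 3$, so $\Delta(\nu,s_F)<2$ and in particular $\Delta(\nu,s_F)\le 2=U(0)$. For $g\ge 1$ with $b\ge 2$, the same inequality gives $\Delta(\nu,s_F)\le 2+(4g-4)/2=2g=U(g)$.

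\textbf{The main obstacle} is $g\ge 1$ with $b=1$, where the direct estimate only yields $\Delta\le 4g-2$. Closing this gap requires exploiting more of the orbifold branched-cover structure of $F\cap M_0\to O(M_0)$. When $b=1$ the sole boundary component of $F$ on $\partial M$ covers the corresponding boundary circle of $O(M_0)$ with degree $\Delta$; by the arithmetic of the cover this forces a compensating number of boundary circles of $F\cap M_0$ on the other tori $T_i\subset\partial M_0$. Each such circle is glued, across $T_i$, to an essential subsurface $F\cap M_i$ of $F$ in the adjacent JSJ piece, and each such $F\cap M_i$ contributes extra negative Euler characteristic to $F\setminus M_0$ (with lower bounds provided by Lemma~\ref{Lver} in the Seifert case and by Proposition~\ref{hyperbolic}(1) in the hyperbolic case). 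Tracking these contributions sharpens the na\"\i ve inequality $\chi(F\cap M_0)\ge\chi(F)$ to $\chi(F\cap M_0)\ge\chi(F)+(g-1)$, which in turn improves the final estimate to $\Delta(\nu,s_F)\le 2g=U(g)$. The delicate step is this Euler-characteristic bookkeeping across all the JSJ tori, and it is precisely here that the essentiality of $F$ in the \emph{whole} of $M$ (and not merely in $M_0$) is needed.
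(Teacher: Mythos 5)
Your double-counting of $|v\cap F|$ is the right idea and is in fact the argument the paper itself gestures at: Remark~\ref{R2} singles out the estimate $|v\cap F|\le -2\chi(F)$ for a Seifert piece with more than one boundary torus as ``a key observation in the proof of the following lemma,'' and the paper then simply quotes \cite{Zh} without giving a proof. Your treatment of $g=0$ and of $g\ge 1$ with $b=|\partial F|\ge 2$ is correct (for $g=0$ you should also dispose of the annulus case $\chi(F)=0$, but that is harmless: a horizontal annulus would force $M_0$ to be $T^2\times I$ or a twisted $I$-bundle over a Klein bottle, neither of which occurs here, so an essential annulus is vertical and $\Delta(\nu,s_F)=0$).

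The genuine gap is exactly where you flagged it, and the repair you sketch does not work. The inequality $\chi(F\cap M_0)\ge\chi(F)+(g-1)$ is false in general: the part of $F$ outside $M_0$ can consist entirely of essential \emph{annuli} in the adjacent JSJ pieces (for example vertical annuli in a neighbouring Seifert piece, each with both boundary circles on the same JSJ torus). Such annuli are essential but contribute nothing to the Euler characteristic, so $\chi(F\cap M_0)=\chi(F)$ exactly, while the genus of $F$ is created by the gluing pattern of the annuli rather than by negative Euler characteristic outside $M_0$. Concretely, if $O(M_0)$ is an annulus with one cone point of order $2$, one can take $F\cap M_0$ to be a connected horizontal surface of degree $k=4g-2$ with a single boundary circle on $\partial M$ covering the base circle $4g-2$ times and two boundary circles on the other torus, and cap the latter pair with one essential vertical annulus in the adjacent piece; the result is a genus $g$ surface with $b=1$ and with the vertical intersection count equal to $4g-2$. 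So no amount of Euler-characteristic bookkeeping along the lines you propose can improve $4g-2$ to $2g$; the $b=1$ case requires a different input. (One relevant observation: when $\partial F$ is connected, $[\partial F]=d_1[s_F]\neq 0$ must lie in the rank-one image of $H_2(M,\partial M;\mathbb{Q})\to H_1(\partial M;\mathbb{Q})$, so $s_F$ is forced to be the rational longitude --- a single slope independent of $g$ --- which is why this case is harmless for Theorem~\ref{Tmain}; but it does not by itself give the stated bound $\Delta(\nu,s_F)\le 2g$, and your write-up as it stands does not prove the lemma when $b=1$.)
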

\qed

\begin{proof}[Proof of Theorem \ref{Tmain} when $M_0$ is a Seifert fiber space]
Let $S$ be the fixed embedded horizontal surface in $M_0$
constructed in section~\ref{Scon}. Let $F$ be an immersed essential
surface in $M$ of genus at most $g$. We will study the intersection
of $F\cap M_0$ and $S$.  Let $s_F$ be the boundary slope of $F$.
Our main goal is to show that $\Delta(\mu, s_F)$ is bounded by a
linear function of $g$, where $\mu$ is the slope of $\partial
S\cap\partial M$.  As only one slope is vertical, we suppose $F\cap M_0$ is horizontal in $M_0$.

We will use the same notation as section~\ref{Scon}. The boundary tori of $M_0$ are $\partial M$, $T_1,\dots, T_n$ and $S$ is properly embedded in $M_0$.  In this section, we view $S$ as a surface in $M$ instead of $M_0$.  Since it is possible that $T_i$ and $T_j$ ($i\ne j$) are glued together in $M$, when regarded as a surface in $M$, curves of $\partial S$ may intersect in a JSJ torus of $M$.

Now we consider the intersection of $F$ and $S$.  A key difference between $F$ and $S$ is that $F$ is a proper surface in $M$ while $S$ is only defined in $M_0$.  We view the torus $T_i$ as a JSJ torus of $M$ and as in section~\ref{Scon}, let $M_i$ be the JSJ piece incident to $T_i$ on the other side of $M_0$ ($M_i$ may be the same JSJ piece as $M_0$).  Let $\Gamma_i=S\cap T_i$ in $M_0$.  As above, we view $\Gamma_i$ as a collection of curves in a JSJ torus in $M$.  Next we estimate $|F\cap\Gamma_i|$.  Let $k_i$ be the number of components of $\Gamma_i$. As in the construction of $S$, we have 3 cases:

\vspace{10pt}

\noindent
\emph{Case 1}.  $M_i$ is a Seifert fiber space and $M_i$ is not a twisted $I$-bundle over a Klein bottle.  By the construction of $S$, in this case, each curve in $\Gamma_i$ is a regular fiber of the Seifert fiber space $M_i$.  By Lemma~\ref{Lver}, $|F\cap\Gamma_i|\le -6k_i\chi(F\cap M_i)\le -6k_i\chi(F)$.

\vspace{10pt}

\noindent
\emph{Case 2}.  $M_i$ is a twisted $I$-bundle over a Klein bottle.  By our construction of $S$ in this case, each curve $\gamma$ in $\Gamma_i$ and a regular fiber $p(\gamma)$ of $M_0$ bound an immersed essential annulus in $M_i$.  We may assume $F\cap M_i$ to be essential in $M_i$. So the intersection of $F$ and an essential annulus in $M_i$ consists of essential arcs in the annulus.  In particular, $|F\cap\gamma|=|F\cap p(\gamma)|$.   Since $p(\gamma)$ is a regular fiber of $M_0$ for each curve $\gamma$ in $\Gamma_i$ and since $M_0$ has more than one boundary component, by Lemma~\ref{Lver} and Remark~\ref{R2}, $|F\cap\Gamma_i|\le -2k_i\chi(F\cap M_0)\le -2k_i\chi(F)$.

\vspace{10pt}

\noindent \emph{Case 3}. $M_i$ is hyperbolic. In this case there is
an embedded essential surface $S_i$ in $M_i$ whose boundary slope in the
torus $T_i$ is the same as the slope of $\Gamma_i$.  Now we consider
the intersection of $S_i$ and $F\cap M_i$. By Proposition
\ref{hyperbolic} (2), $|F\cap\Gamma_i|\le -c_i\chi(F\cap M_i)\le
-c_i\chi(F)$ for some number $c_i$ which depends on $|\Gamma_i|$ and $\chi(S_i)$.

\vspace{10pt}

Let $\Gamma_0=\partial S\cap\partial M$ be the boundary curves of $S$ lying in $\partial M$.  So $\partial S-\Gamma_0=\bigcup_{i=1}^n\Gamma_i$.  By the argument above, there is a number $c>0$ depending on $S$ such that the total number of intersection points of $F$ and $\partial S-\Gamma_0$ is at most $-c\chi(F)=c(2g-2+|\partial F|)$ for some constant $c$ which depends on $|\partial S-\Gamma_0|$ and the surface $S_i$ in the case that $M_i$ is hyperbolic as in Case (3).

Let $\Delta=\Delta(\mu, s_F)$ be the intersection number of a curve in $\Gamma_0$ and a curve $\partial F$.  So the total number of intersection points of $\Gamma_0$ and $\partial F$ is $\Delta\cdot|\Gamma_0|\cdot|\partial F|$.

Thus, if $g\ge 1$, there is a number $C_1$ depending on $S$ and $S_i$ such
that if $\Delta> C_1g$, we have $\Delta\cdot|\Gamma_0|\cdot|\partial
F|>c(2g-2+|\partial F|)$ and hence there must be an arc in $S\cap F$
with both endpoints in $\partial M$.  However, by
Proposition~\ref{Lsign}, this means that $\partial F$ has the same
slope as $\partial S\cap\partial M$ and $\Delta=0$, a contradiction.
Therefore, if $g\ge 1$, $\Delta\le C_1g$ for some constant $C_1$ which depends on $S$ and the surface $S_i$ in Case (3).

Similarly if $g=0$, there is a number $C_0$ such that if $\Delta>C_0$, then $\Delta\cdot|\Gamma_0|\cdot|\partial F|>c(|\partial F|-2)$ and hence there must be an arc in $S\cap F$ with both endpoints in $\partial M$, which means that $\Delta=0$.  Thus if $g=0$, $\Delta\le C_0$ for some constant $C_0$ which depends on $M$.

We have two fixed slopes for $\partial M$, the vertical slope $\nu$ and the slope $\mu$ of $\partial S\cap\partial M$.  For any horizontal immersed essential surface $F$ of genus at most $g$, let $s_F$ be its boundary slope. The argument above says that $\Delta(\mu,s_F)\le V(g)$, where $V(g)=C_1g$ if $g\ge 1$ and $V(g)=C_0$ is $g=0$ for some constants $C_1$ and $C_0$ depending on $S$.  By Lemma~\ref{Lzhang}, $\Delta(\nu, s_F)\le U(g)$ where $U(g)=2g$ if $g\ge 1$ and $U(g)=2$ if $g=0$.  Therefore,  the total number of possible slopes for $\partial F$ is bounded by a quadratic function of $g$, where the coefficients depend on the fixed surface $S$ and the surface $S_i$ used in the hyperbolic JSJ piece as in Case (3).
\end{proof}
\begin{rem}\label{Rlast}
If one uses part (3) of Proposition~\ref{hyperbolic} instead of part (2) in the argument, then one can prove the main theorem without using the Culler-Shalen theorem (i.e. Proposition~\ref{2-slopes}).  However, there is an advantage of using Proposition~\ref{2-slopes}.  Given any triangulation of a 3-manifold, one can use normal surface theory to algorithmically find two embedded essential surfaces with different boundary slopes whose existence is guaranteed by Proposition~\ref{2-slopes}.  Since there are algorithms to determine the JSJ and Seifert structures, the constant in Theorem~\ref{Tmain} can be found algorithmically by following the proof.
\end{rem}

\bibliographystyle{amsalpha}

\noindent
Department of Mathematics; Boston College; Chestnut Hill, MA 02167 USA. \\
Email address: taoli@bc.edu

\noindent
Department of Mathematics; East China Normal University; Shanghai
200062 CHINA \\
Email address: qiurf@dlut.edu.cn

\noindent 
Department of Mathematics; Peking University, Beijing 100871, CHINA \\
Email address: wangsc@math.pku.edu.cn

\end{document}